\newtheorem{theorem}{Theorem}
\newtheorem{fact}[theorem]{Fact}
\newtheorem{theorem2}{Theorem}
\newtheorem{condition}[theorem2]{Condition}
\newtheorem{theorem3}{Theorem}
\newtheorem{conjecture}[theorem3]{Conjecture}
\begin{document}

\title{Pseudorandom number generator based on the Bernoulli map on cubic algebraic integers}

\author{Asaki Saito}
\email{saito@fun.ac.jp}
\affiliation{Future University Hakodate, 116-2 Kamedanakano-cho, Hakodate, Hokkaido 041-8655, Japan}

\author{Akihiro Yamaguchi}
\affiliation{Fukuoka Institute of Technology, 3-30-1 Wajiro-higashi, Higashi-ku, Fukuoka 811-0295, Japan}

\date{\today}

\begin{abstract}
We develop a method for generating pseudorandom binary sequences using
the Bernoulli map on cubic algebraic integers.
The distinguishing characteristic of our generator is that it generates chaotic true
orbits of the Bernoulli map by exact computation.
In particular, we clarify a way to properly prepare a set of initial
points (i.e., seeds), which is needed when generating multiple
pseudorandom sequences.
With this seed selection method, we can distribute the initial points
almost uniformly in the unit interval and can also guarantee that the
orbits starting from them do not merge.
We also report results of a large variety of tests indicating that the generated pseudorandom sequences have good
statistical properties as well as
an advantage over what is probably the most popular generator, the
Mersenne Twister MT19937.
\end{abstract}

\pacs{05.45.-a}

\maketitle

\section{Introduction}\label{sec:Introduction}

A random sequence is a sequence of numbers that are a typical sample
of independently identically distributed random variables, and it
cannot be generated by a deterministic algorithm (cf., e.g.,
Refs.~\cite{Li,Sugita}).
A pseudorandom sequence, i.e., a computer-generated sequence
that appears similar to a random one, is therefore not random at all, but has a wide range of applications, such as Monte
Carlo methods, probabilistic algorithms, and cryptography \cite{Knuth}.
In order to generate such pseudorandom sequences,
various pseudorandom number generators have been proposed,
including linear congruential generators \cite{Lehmer}, linear
feedback shift registers \cite{Golomb}, and the Mersenne Twister
\cite{Matsumoto}.
Of all the generators, MT19937 \cite{Matsumoto}, a version of the
Mersenne Twister, is probably the most popular one at this time.
MT19937 can produce, at very high speed, a pseudorandom sequence
having an ``astronomically'' long period of length $2^{19937}-1$ and having a
high-dimensional (623-dimensional) equidistribution property, which
makes the generator very useful, especially for Monte Carlo
simulations.
Even if a generator has these remarkable properties, however, there is no
guarantee that independence, which is the greatest characteristic of
random sequences, is preferable
(cf. Sec.~\ref{subsec:ComparisonWithMT19937}).
In this paper, we deal with the issue of how we generate pseudorandom
sequences having the best possible statistical properties even if
such generation increases the computational cost to some extent.

Among random sequences, the most fundamental ones are (uniform)
random {\it binary} sequences.
One of the mathematically simplest and soundest ways to generate
(pseudo-) random binary sequences is to use the Bernoulli map.
Also known as the doubling map, the dyadic map,
or the $2 x$ modulo $1$ map, the Bernoulli map
is a map on the half-open unit interval
$\left[0,\,1\right)$ given by
\[
M_B(x)= \left\{
\begin{array}{ll}
2x &   \textrm{~~  if    ~} x \in \left[0,\,1/2\right) \\
2x-1 & \textrm{~~  if    ~} x \in \left[1/2,\,1\right).
\end{array}\right.
\]
Note that the repeated tossing of a fair coin is modeled by the one-sided
Bernoulli shift on $\left\{0,\,1\right\}^{{\mathbb N}}$ with $0$ and
$1$ having equal weight $1/2$, and this Bernoulli shift is
measure-theoretically isomorphic to $M_B$
(cf., e.g., Ref.~\cite{Billingsley}).
Thus, $M_B$ can produce binary sequences
equivalent to those obtained by tossing a fair coin.
However, it is well known that one cannot simulate
$M_B$ with conventional simulation methods
such as those using double-precision binary floating-point numbers or
arbitrary-precision rational numbers (see, e.g.,
Refs.~\cite{AtleeJackson,SaitoPTPS}).
This is because, for $M_B$, finite binary decimals on
$\left[0,\,1\right)$ are eventually fixed points (i.e., points that
reach the fixed point at $x=0$ after finitely many iterations) and
because rational numbers on $\left[0,\,1\right)$ are eventually
periodic points (i.e., points that reach a periodic point after
finitely many iterations).
For this reason, a computational method that realizes pseudorandom
number generation using $M_B$ has not been proposed
\cite{FootnoteNoPRNGwithTentBaker} (except for our previous study
\cite{SaitoChaos2}), although pseudorandom number generators based on
chaotic dynamics have been very widely studied for many decades
\cite{Ulam,LiYorke,Oishi}.

On the other hand, orbit computations using algebraic numbers other
than rational ones have been performed in the fields of number theory
and arithmetic dynamics (e.g.,
Refs.~\cite{Lang,Vivaldi,Lowenstein2,Akiyama,FISTY}).
Also, by using our methods to achieve exact simulations of piecewise
linear and linear fractional maps \cite{SaitoPhysD,SaitoChaos}, one
can generate errorless true orbits displaying the same statistical properties as
typical orbits of $M_B$ (as well as those of the tent map and the
baker's transformation; cf. \cite{FootnoteNoPRNGwithTentBaker}).
In particular, by using true orbits on quadratic algebraic integers,
we succeeded in realizing a pseudorandom number generator using $M_B$
\cite{SaitoChaos2}.
To our knowledge, the generator of Ref.~\cite{SaitoChaos2} is the only
one that has a direct connection to the repeated tossing of a
fair coin, but we can expect that we can establish such generators
having good statistical properties also by using algebraic integers of
degree three or more.
In order to realize such generators, however, we particularly need to
resolve the issue described below.
When proposing a pseudorandom number generator, it is desirable to
simultaneously disclose how one can properly perform seed selection,
especially in the case where one needs multiple seeds to generate more
than one pseudorandom sequence.
In particular, such a method for selecting initial points (i.e.,
seeds) is indispensable for a generator based on true orbits:
In true orbit computations, the longer a true orbit, the higher the
computational cost of generating it.
Therefore, the computational cost can be markedly lowered by
generating a number of relatively short true orbits.
We could establish such a seed selection method in the case of
quadratic algebraic integers, but algebraic numbers of different
degrees are quite distinct from each other, and it is unclear even
whether such a seed selection method exists in the case of algebraic
integers of degree three or more.

In this paper, we realize a pseudorandom number generator using
chaotic true orbits of the Bernoulli map on cubic algebraic integers.
We also devise, for the cubic case, a seed selection method for
generating multiple pseudorandom binary sequences.
Moreover, we demonstrate the ability of our generator by performing
two kinds of computer experiments: extensive statistical
testing and a comparison with MT19937.

\section{Proposed Pseudorandom Number Generator}

In this study, we use cubic algebraic integers to simulate the
Bernoulli map $M_B$.
A cubic algebraic integer is a complex number that is a root of a
monic irreducible cubic polynomial $x^3+bx^2+cx+d$ with $b,c,d \in
{\mathbb Z}$
(see, e.g., Ref.~\cite{Hecke} for a detailed explanation of algebraic
integers).
$M_B$ maps any cubic algebraic integer in the open unit interval
$(0,1)$ to a cubic algebraic integer in $(0,1)$.

Let us introduce two sets, $\bar{S}$ and ${S}$, and a map $\pi$ from
$\bar{S}$ to ${S}$.
Let $\bar{S}$ be the set of all $(b, c, d) \in {\mathbb Z}^3$
satisfying the following three conditions:
\begin{description}
\item[(i)] $b^2-3c \le 0$
\item[(ii)] $d<0$
\item[(iii)] $1+b+c+d>0$
\end{description}
Figure~\ref{fg:SetS} shows part of $\bar{S}$.
If we consider a function $f:{\mathbb R} \rightarrow {\mathbb R}$,
given by $f(x)=x^3+bx^2+cx+d$ with $(b, c, d) \in \bar{S}$, we see
from (i) that $f$ is strictly monotonically increasing.
Thus, $f$ has a unique real root, denoted by $\alpha$.
We also see from (ii) and (iii) that $f(0)<0$ and $f(1)>0$, which
implies $\alpha \in (0,1)$.
Since $\alpha \notin {\mathbb Z}$, we see that $\alpha$ is a cubic
algebraic integer.
Also, let ${S}$ be the set of all cubic algebraic integers in $(0,1)$
that are roots of $x^3+bx^2+cx+d$ with $(b, c, d) \in \bar{S}$.
We can define a map $\pi$ from $\bar{S}$ to ${S}$ by assigning each
$(b, c, d) \in \bar{S}$ the unique real root $\alpha \in {S}$ of
$x^3+bx^2+cx+d$.
It is easy to see that $\pi : \bar{S} \rightarrow {S}$ is a bijection.
In the following, we represent $\alpha \in {S}$ with $(b, c, d) =
\pi^{-1} (\alpha) \in \bar{S}$.
\begin{figure}
\begin{center}
\includegraphics[width=0.5\textwidth,clip]{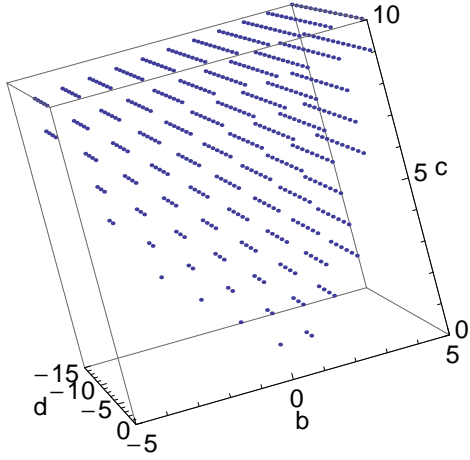}
\end{center}
\caption{\label{fg:SetS}Part of $\bar{S}$. The dots represent
elements of $\bar{S}$.}
\end{figure}

$M_B$ maps $\alpha \in {S}$ to $\alpha'=2\alpha \mbox{ mod } 1$.
As already mentioned, $\alpha'$ is a cubic algebraic integer in
$(0,1)$.
Moreover, we can see $\alpha' \in {S}$ as follows.
Let $(b, c, d)$ be the representation of $\alpha$, and let
$x^3+b'x^2+c'x+d'$ be the minimal polynomial of $\alpha'$.
Then, the coefficients $b'$, $c'$, $d'$ are given as follows:\\
\begin{subequations}
If $\alpha \in (0,1/2)$,
\label{eq:Transformation}
\begin{eqnarray}\label{eq:LeftBranch}
\left(
\begin{array}{c}
b'\\
c'\\
d'
\end{array}
\right)
&=&
\left(
\begin{array}{ccc}
2	&	0 &	0\\
0	&	4 &	0\\
0	&	0 &	8
\end{array}
\right)
\left(
\begin{array}{c}
b\\
c\\
d
\end{array}
\right).
\end{eqnarray}
If $\alpha \in (1/2,1)$,
\begin{eqnarray}\label{eq:RightBranch}
\left(
\begin{array}{c}
b'\\
c'\\
d'
\end{array}
\right)
&=&
\left(
\begin{array}{ccc}
2	&	0 &	0\\
4	&	4 &	0\\
2	&	4 &	8
\end{array}
\right)
\left(
\begin{array}{c}
b\\
c\\
d
\end{array}
\right)
+
\left(
\begin{array}{c}
3\\
3\\
1
\end{array}
\right).
\end{eqnarray}
\end{subequations}
Equation~\eqref{eq:LeftBranch} (resp. Eq.~\eqref{eq:RightBranch}) is
obtained by substituting $\alpha=\alpha'/2$ (resp.
$\alpha=(\alpha'+1)/2$) into $x^3+bx^2+cx+d=0$.
It is easy to confirm that $(b',c',d')$ satisfies the conditions (i),
(ii), and (iii), which implies $\alpha' \in {S}$.

Equation~\eqref{eq:Transformation} gives the explicit form of
$\pi^{-1} \circ M_B \circ \pi$, i.e., the transformation on $\bar{S}$
corresponding to $M_B$.
We denote this transformation by $\bar{M}_B$.
Note that $\bar{M}_B$ gives the representation $(b',c',d')$ of
$\alpha'$ from the representation $(b, c, d)$ of $\alpha$. This
transformation is exactly computable by using only integer arithmetic.
Incidentally, we see easily that $(b, c, d)$ with $b^2-3c < 0$ is
mapped to $(b',c',d')$ with $b'^2-3c' < 0$, and that $(b, c, d)$ with
$b^2-3c = 0$ is mapped to $(b',c',d')$ with $b'^2-3c' = 0$.

One has to exactly determine whether a given $\alpha \in {S}$ is in
$(0,1/2)$ or $(1/2,1)$ in order to generate true orbits of $M_B$ on
${S}$, thereby obtaining pseudorandom binary sequences.
Let $\alpha$ be represented by $(b, c, d) \in \bar{S}$, and let
$f(x)=x^3+bx^2+cx+d$.
This determination can be made easily by evaluating the sign
of $f(1/2)$.
In fact, if $f(1/2)>0$, then $\alpha \in (0,1/2)$;
otherwise, i.e., if $f(1/2)<0$, then $\alpha \in (1/2,1)$.
To evaluate the sign of $f(1/2)$, it is sufficient to evaluate that of
$1+2b+4c+8d$.
Therefore, one can exactly determine whether $\alpha \in (0,1/2)$ or
not by using only integer arithmetic.
Consequently, one can generate a true orbit $\left\{ (b_n, c_n, d_n)
\right\}_{n=0,1,2,\cdots}$ of $\bar{M}_B$ starting from an initial
point $(b_0, c_0, d_0) \in \bar{S}$, where $(b_n, c_n,
d_n)=\bar{M}_B^n(b_0, c_0, d_0)$.
In order to obtain a pseudorandom binary sequence $\left\{ \epsilon_n
\right\}_{n=0,1,2,\cdots}$, all one has to do is let $\epsilon_n=0$ if
$1+2b_n+4c_n+8d_n>0$ and $\epsilon_n=1$ if $1+2b_n+4c_n+8d_n<0$, in
the course of generating a true orbit.

\section{Seed Selection Method}\label{sec:SeedSelectionMethod}

In this section, we consider how to select initial points (i.e.,
seeds).
Because $\alpha$ represented by $(b, c, d) \in \bar{S}$ is irrational,
its binary expansion is guaranteed to be nonperiodic.
Thus, one can choose any $(b, c, d) \in \bar{S}$ as an initial point
in the sense that at least one obtains a nonperiodic binary sequence.
It is worth noting that the binary sequence obtained from $(b, c, d)
\in \bar{S}$ is not only guaranteed to be nonperiodic.
In fact, it is widely believed that every irrational algebraic number
is a normal number (Borel's conjecture \cite{Borel}).
Recall that $\alpha \in {\mathbb R}$ is said to be normal if, for any
integer $b \ge 2$, every word of length $l \ge 1$ on the alphabet
$\{0,1,\ldots,b-1\}$ occurs in the base-$b$ expansion of $\alpha$ with
asymptotic frequency $b^{-l}$.
Also, it is proved that the base-$b$ expansion of any irrational
algebraic number
cannot have a regularity so simple that it can be
generated by a finite automaton \cite{Adamczewski}.
Moreover, our previous studies strongly suggest that the base-$b$
expansion of any irrational algebraic number has the same statistical
properties as those of almost all real numbers
\cite{SaitoPhysD,SaitoChaos,SaitoChaos2}.

For the generation of more than one pseudorandom sequence, it is
necessary to prepare an initial point set $\bar{I} \subset \bar{S}$.
One can consider a variety of conditions that $\bar{I}$ should
satisfy; here, we impose the following two conditions (Conditions
\ref{condition1} and \ref{condition2}) on it.

\begin{condition}\label{condition1}
The elements of ${I} \subset {S}$ corresponding to $\bar{I}$ are
uniformly distributed in the unit interval.
\end{condition}

This condition is for unbiased sampling of initial points
and is a natural one also from the viewpoint of applications, such as
the Monte Carlo method.
However, it is a nontrivial question as to how we can construct
$\bar{I}$ satisfying Condition \ref{condition1}, because $\alpha$
depends on $(b, c, d)$ in a very complicated way.
In fact, $\alpha$ takes the following complex form:\\
\begin{subequations}
If $b^2-3c < 0$,
\begin{eqnarray*}
\alpha &=& \frac{\sqrt[3]{-2 b^3+9 b c-27 d+3 \sqrt{3} \sqrt{-b^2 c^2+4 c^3+4 b^3 d-18 b c d+27 d^2}}}
{3 \sqrt[3]{2}}\\
& & -\frac{\sqrt[3]{2} \left(-b^2+3 c\right)}{3 
\sqrt[3]{-2 b^3+9 b c-27 d+3 \sqrt{3} \sqrt{-b^2 c^2+4 c^3+4 b^3 d-18 b c d+27 d^2}}
}
-\frac{b}{3}.
\end{eqnarray*}
If $b^2-3c = 0$,
\begin{eqnarray*}
\alpha &=& \frac{\sqrt[3]{-2 b^3+9 b c-27 d}}
{3}-\frac{b}{3}.
\end{eqnarray*}
\end{subequations}

\begin{condition}\label{condition2}
The orbits starting from the elements of $\bar{I}$ do not merge.
\end{condition}

Even if one selects two different points as the elements of $\bar{S}$,
the latter parts of the resulting binary sequences may coincide with
each other. In fact, this happens if the two points are on the
same orbit or, more generally, if the orbits starting from them merge.
When generating multiple pseudorandom sequences, it is desirable that
the binary sequences derived from $\bar{I}$ are as different from each
other as possible, and it is obviously desirable that $\bar{I}$
satisfies Condition \ref{condition2}.
However, in order to realize such an $\bar{I}$, we
need to make it clear how we can select the elements of $\bar{I}$
while avoiding such orbital overlaps.

In what follows, we show that we can construct an $\bar{I}$ satisfying
Conditions \ref{condition1} and \ref{condition2}.

Concerning Condition \ref{condition1}, the following fact holds.

\begin{fact}\label{fact:EquidistantlyDistributed}
Let $c$ be a sufficiently large positive integer, and let
\begin{equation}\label{eq:InitialPointSet}
\bar{I}_{b,c}=\left\{(b, c, d) \in \bar{S} ~|~ d \in
\left\{-1,-2,\cdots,-(b+c)\right\} \right\}.
\end{equation}
Then, the elements of ${I}_{b,c} \subset {S}$ corresponding to
$\bar{I}_{b,c}$ are distributed almost uniformly (equidistantly) in
the unit interval.
\end{fact}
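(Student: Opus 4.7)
The strategy is to view the construction as inverting a function that is nearly linear when $c$ is large. Fix $(b,c)$ satisfying condition (i) and define $g(x) := x^3 + bx^2 + cx$. Then a point $\alpha \in (0,1)$ is the real root of $x^3+bx^2+cx+d$ (the element of $S$ assigned by $\pi$ to $(b,c,d)$) precisely when $g(\alpha) = -d$. Because $g'(x) = 3x^2 + 2bx + c$ has discriminant $4(b^2-3c) \le 0$ by condition (i), $g'$ is nonnegative on $\mathbb{R}$ and vanishes in at most one point, so $g$ is strictly increasing. Since $g(0)=0$ and $g(1)=1+b+c$, the map $g:[0,1]\to[0,1+b+c]$ is a bijection, and
\[
I_{b,c} \;=\; \{\alpha_k : k = 1, 2, \ldots, b+c\}, \qquad \alpha_k := g^{-1}(k).
\]

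Second, I would estimate consecutive spacings via the mean value theorem: there exists $\xi_k \in (\alpha_k,\alpha_{k+1}) \subset (0,1)$ with $\alpha_{k+1}-\alpha_k = 1/g'(\xi_k)$. On $[0,1]$ the elementary bound $|g'(x)-c| = |3x^2+2bx| \le 3 + 2|b|$, combined with the consequence $|b|\le \sqrt{3c}$ of condition (i), yields $g'(x) = c + O(\sqrt{c})$ uniformly in $x\in[0,1]$. Hence
\[
\alpha_{k+1}-\alpha_k \;=\; \frac{1}{c} + O\!\left(\frac{1}{c^{3/2}}\right)
\]
uniformly in $k$ as $c \to \infty$. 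Finally, the ``ideal'' equidistant spacing $1/(b+c)$ satisfies $\left|\tfrac{1}{c}-\tfrac{1}{b+c}\right| = \tfrac{|b|}{c(b+c)} = O(c^{-3/2})$, using $b + c \ge c - \sqrt{3c}$. Combining these two estimates shows that every consecutive spacing differs from the average spacing $1/(b+c)$ by $O(c^{-3/2})$, a vanishing fraction of itself, which makes precise the claim that the $\alpha_k$'s are almost equidistantly distributed in $(0,1)$.

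The main obstacle here is conceptual rather than technical: the assertion ``distributed almost uniformly (equidistantly)'' is informal, so one must first commit to what it means quantitatively. Once one interprets it as saying that all consecutive spacings are within a multiplicative $1+O(1/\sqrt{c})$ factor of one another (or of the mean $1/(b+c)$), the rest is a one-line mean value theorem computation driven entirely by condition (i), which is used in exactly two places: to guarantee monotonicity of $g$ and to bound $|b|$ by $\sqrt{3c}$. The edge case $b^2 = 3c$, where $g'$ vanishes at a single point, causes no difficulty because $g$ remains strictly increasing and, for $c$ sufficiently large, the point $x=-b/3$ where $g'$ vanishes lies outside $[0,1]$ anyway.
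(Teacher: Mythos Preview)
Your argument is correct and essentially identical to the paper's: both apply the mean value theorem to the common derivative $3x^2+2bx+c$, bound it on $(0,1)$ by $c\pm(2|b|+O(1))$, and invoke $|b|\le\sqrt{3c}$ from condition (i) to conclude that consecutive spacings are $c^{-1}(1+O(c^{-1/2}))$. The only cosmetic differences are that you package the roots as preimages $g^{-1}(k)$ of a single increasing function $g(x)=x^3+bx^2+cx$ rather than as roots $\alpha_d$ of the family $f_d$, and that you additionally compare to the mean spacing $1/(b+c)$ rather than just to $1/c$; the paper in turn makes the convergence of the extreme points $\alpha_{-1}\to 0$ and $\alpha_{-(b+c)}\to 1$ explicit, which in your setup is implicit in the bijection $g:[0,1]\to[0,1+b+c]$ together with the derivative bound.
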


\begin{proof}
Since $|b| \le \sqrt{3 c}$, $|b| \ll c$ holds for sufficiently large $c$.
Let $(b, c, d) \in \bar{I}_{b,c}$, $\alpha_{d}= \pi(b, c, d)$, and
$f_{d}(x)=x^3+bx^2+cx+d$.
We see that $f_{-1}(0)=-1$, $\lim_{c \to \infty} f_{-1}(2/c)=1$,
$\lim_{c \to \infty} f_{-(b+c)}(1-2/c)=-1$, and $f_{-(b+c)}(1)=1$.
Thus, we have $\lim_{c \to \infty} \alpha_{-1}=0$ and $\lim_{c \to
  \infty} \alpha_{-(b+c)}=1$.
We also see easily that $\alpha_{d} < \alpha_{d-1}$ and
$f_{d}(\alpha_{d-1})=1$ hold for $d \in
\left\{-1,-2,\cdots,-(b+c)+1\right\}$.
Let $\Delta_{d} = \alpha_{d-1} - \alpha_{d}$ ($d \in
\left\{-1,-2,\cdots,-(b+c)+1\right\}$).
By the mean value theorem, there exists $\beta \in \left(\alpha_{d},
\alpha_{d-1} \right)$ such that $f_{d}'(\beta)=\Delta_{d}^{-1}$.
It is easy to see that $-2 |b| + c < f_{d}'(x) < 3 +2 |b| + c$ holds
for $x \in \left(0, 1 \right)$.
Thus, for sufficiently large $c$, we have $(3 +2 |b| + c)^{-1} <
\Delta_{d} < (-2 |b| + c)^{-1}$, which implies
\[
\left(1+ \frac{2 |b| +3}{c} \right)^{-1} < \frac{\Delta_{d}}{c^{-1}} <
\left(1-\frac{2 |b|}{c}\right)^{-1}.
\]
We note that $-x+2 \le x^{-1}$ holds for $x \ge 1$ and that $x^{-1}
\le -2 x +3$ holds for $1/2 \le x \le 1$.
Thus, for sufficiently large $c$, we have
\[
1- \frac{2 |b| +3}{c} < \frac{\Delta_{d}}{c^{-1}} <
1+ \frac{4 |b|}{c},
\]
which implies
\[
\lim_{c \to \infty} \max_{d \in
\left\{-1,\cdots,-(b+c)+1\right\}} \left| \frac{\Delta_{d}}{c^{-1}} -1 \right|=0.
\]
Therefore, if we take a sufficiently large $c$, the elements of
${I}_{b,c}$ are distributed across the unit interval almost
equidistantly, with distances approximately equal to $c^{-1}$.
\end{proof}

An important characteristic of $\bar{M}_B$ on $\bar{S}$ (or
equivalently, $M_B$ on ${S}$) when considering $\bar{I}$ satisfying
Condition \ref{condition2} is its injectivity.
The inverse image of $(b', c', d') \in \bar{S}$ under $\bar{M}_B$ is
uniquely determined if it exists:
If $b'$ (or $c'$ or $d'$) is even, $(b', c', d')$ is derived from
Eq.~\eqref{eq:LeftBranch}.
If odd, it is derived from Eq.~\eqref{eq:RightBranch}.
Let us call an element of $\bar{S}$ a {\it source point} if it does
not have an inverse image in $\bar{S}$.
It is clear that two different source points do not exist on the same
orbit.
Also, the injectivity prevents the merging of orbits starting from different source points.
Concerning the source points, the following fact holds.

\begin{fact}\label{fact:SourcePoints}
There is no inverse image for $(b, c, d) \in \bar{S}$ if and only if
one of the following conditions holds:
\begin{description}
\item[(i)] $b$, $c$, $d$ are neither all even nor all odd.
\item[(ii)] $b$, $c$, $d$ are all even, but $c \not\equiv 0 \pmod{4}$
or $d \not\equiv 0 \pmod{8}$.
\item[(iii)] $b$, $c$, $d$ are all odd, but $-2b+c \not\equiv 1 \pmod{4}$
or $b-c+d \not\equiv 1 \pmod{8}$.
\end{description}
\end{fact}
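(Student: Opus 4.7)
The plan is to characterize when $(b,c,d) \in \bar{S}$ admits a preimage under $\bar{M}_B$ in $\bar{S}$ by inverting each branch \eqref{eq:LeftBranch} and \eqref{eq:RightBranch} separately, and then to negate. The injectivity of $\bar{M}_B$ already noted in the text means that at most one preimage can exist, so the task reduces to asking when each linear/affine system admits an integer solution that lands back in $\bar{S}$.

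First, I would invert \eqref{eq:LeftBranch} trivially: the candidate preimage $(b_0,c_0,d_0)$ equals $(b/2,\,c/4,\,d/8)$, which lies in $\mathbb{Z}^3$ iff $b$ is even, $c \equiv 0 \pmod{4}$, and $d \equiv 0 \pmod{8}$. This automatically forces $b,c,d$ to all be even. Next, I would invert \eqref{eq:RightBranch} by successive substitution, obtaining $b_0 = (b-3)/2$, $c_0 = (c-2b+3)/4$, and $d_0 = (b-c+d-1)/8$; their integrality yields, in order, the conditions $b$ odd, $-2b+c \equiv 1 \pmod{4}$, and $b-c+d \equiv 1 \pmod{8}$. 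A short parity check (using $2b \equiv 2 \pmod{4}$ for $b$ odd, so $c \equiv 3 \pmod{4}$, and then $b-c$ even forces $d$ odd) shows that these conditions in fact require $b,c,d$ all odd.

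I would then verify that whenever either set of divisibility/congruence conditions is met, the resulting $(b_0,c_0,d_0)$ automatically satisfies the defining conditions of $\bar{S}$. Condition (i) reduces in both branches to $b_0^{2} - 3c_0 = (b^{2}-3c)/4 \le 0$ by direct substitution. Conditions (ii) and (iii) follow from the observation that the real root of $x^{3} + b_0 x^{2} + c_0 x + d_0$ equals $\alpha/2$ (left branch) or $(\alpha+1)/2$ (right branch), and hence remains in $(0,1)$; combined with strict monotonicity from (i), this forces the cubic to be negative at $0$ and positive at $1$, giving $d_0 < 0$ and $1 + b_0 + c_0 + d_0 > 0$. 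So no further constraint beyond the integrality conditions arises.

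Finally, I would combine and negate. A preimage in $\bar{S}$ exists iff the left-branch conditions (all even, plus $4 \mid c$ and $8 \mid d$) hold, or the right-branch conditions (all odd, plus $-2b+c \equiv 1 \pmod{4}$ and $b-c+d \equiv 1 \pmod{8}$) hold. Partitioning $(b,c,d)$ by its parity pattern, the failure of both branches decomposes exactly into cases (i), (ii), (iii) of the statement: mixed parity kills both branches simultaneously; all-even leaves only the left branch in play, whose failure is precisely $c \not\equiv 0 \pmod{4}$ or $d \not\equiv 0 \pmod{8}$; and all-odd leaves only the right branch, whose failure is precisely $-2b+c \not\equiv 1 \pmod{4}$ or $b-c+d \not\equiv 1 \pmod{8}$. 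The main obstacle is the modular bookkeeping in inverting \eqref{eq:RightBranch}, where the constant offset $(3,3,1)^{T}$ must be threaded consistently through successive divisions by $2$, $4$, and $8$; once this is done carefully, the remainder is straightforward case analysis.
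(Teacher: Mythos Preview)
Your proposal is correct and follows essentially the same route as the paper: invert each branch of \eqref{eq:Transformation}, read off the congruence conditions for an integral preimage, and negate. The only difference is that you explicitly verify the candidate preimage $(b_0,c_0,d_0)$ lands back in $\bar{S}$ (via $b_0^2-3c_0=(b^2-3c)/4$ and the root being $\alpha/2$ or $(\alpha+1)/2$), whereas the paper absorbs this into the phrase ``We can easily verify''; your added care here is appropriate and does not constitute a different approach.
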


\begin{proof}
If $(b, c, d) \in \bar{S}$ has an inverse image, then by
Eq.~\eqref{eq:Transformation} $b$, $c$, $d$ are either all even or all
odd.
We can easily verify that a necessary and sufficient condition for
$(b, c, d) \in \bar{S}$ with $b$, $c$, $d$ all even to have an inverse
image is that both $c \equiv 0 \pmod{4}$ and $d \equiv 0 \pmod{8}$
hold.
Similarly, we can verify that a necessary and sufficient condition for
$(b, c, d) \in \bar{S}$ with $b$, $c$, $d$ all odd to have an inverse
image is that both $-2b+c \equiv 1 \pmod{4}$ and $b-c+d \equiv 1
\pmod{8}$ hold.
Therefore, $(b, c, d) \in \bar{S}$ has no inverse image if and only if
one of the conditions (i)--(iii) holds.
\end{proof}

The orbits starting from the elements of $\bar{I}$ do not merge if one
lets $\bar{I}$ consist of only source points.

Consequently, on the basis of Facts
\ref{fact:EquidistantlyDistributed} and \ref{fact:SourcePoints}, we
can construct $\bar{I}$ satisfying Conditions \ref{condition1} and
\ref{condition2}:
The simplest way is to choose $b$ to be an even integer and $c$ to be
a large positive odd integer, or $b$ to be an odd integer and $c$ to
be a large positive even integer, and to let $\bar{I}$ be the
$\bar{I}_{b,c}$ given by Eq.~\eqref{eq:InitialPointSet}.
Note, however, that consisting of only source points is not a
necessary condition for $\bar{I}$ to be free from orbital mergers.
For example, $\bar{I}_{b,c}$ with $b=0$ and $c=8$ contains a point
that is not a source point, but mergers do not occur with $\bar{I}_{0,8}$ (cf. next
paragraph).

Condition \ref{condition2} is equivalent to the condition that latter
parts of the binary sequences derived from $\bar{I}$ do not coincide,
which in turn is equivalent to the condition that, even if each of the
binary sequences is transformed by any multi-bit shift operation that
is expressible as a map $x \mapsto 2^n x \mbox{ mod } 1$ ($n \in
{\mathbb Z}_{\geq 0}$), none of the resulting sequences are identical.
With computer assistance, one can reveal that many, but not all, of
$\bar{I}_{b,c}$ have a much more desirable property than Condition
\ref{condition2}.
Namely, for many of $\bar{I}_{b,c}$, ${\mathbb Q}(\alpha) \neq
{\mathbb Q}(\beta)$ holds for all $\alpha, \beta \in {I}_{b,c}$ with
$\alpha \neq \beta$ (i.e., each element of ${I}_{b,c}$ belongs to a
different cubic field).
In particular, we experimentally confirmed that all of ${I}_{b,c}$
with $b=0$ and $c$ in $1 \leq c \leq 5 \times 10^4$ have this
desirable property, which leads us to the following conjecture:

\begin{conjecture}\label{conjecture1}
Let $c \in {\mathbb Z}_{> 0}$.
Then, ${\mathbb Q}(\alpha) \neq {\mathbb Q}(\beta)$ holds for all
$\alpha, \beta \in {I}_{0,c}$ with $\alpha \neq \beta$.
\end{conjecture}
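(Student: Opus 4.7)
The plan is to argue by contradiction: assume two distinct $\alpha, \beta \in I_{0,c}$ share a common cubic field $K := \mathbb{Q}(\alpha) = \mathbb{Q}(\beta)$, with integer representations $(0, c, d_\alpha)$ and $(0, c, d_\beta)$, and derive a contradiction with the constraint $d_\alpha, d_\beta \in \{-c, \ldots, -1\}$. Two complementary routes suggest themselves.

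The first is a discriminant obstruction. Both $\alpha$ and $\beta$ are algebraic integers generating $K$, so their polynomial discriminants $-(4c^3+27d_\alpha^2)$ and $-(4c^3+27d_\beta^2)$ each equal the field discriminant $d_K$ multiplied by the square of the corresponding module index $[\mathcal{O}_K:\mathbb{Z}[\alpha]]^2$ and $[\mathcal{O}_K:\mathbb{Z}[\beta]]^2$. Taking the ratio, the integer $(4c^3+27d_\alpha^2)(4c^3+27d_\beta^2)$ must be a perfect square. The sub-case $d_\alpha^2 = d_\beta^2$ is dispatched immediately by the sign restriction $d<0$, so it remains to prove this Diophantine coincidence is impossible for $|d_\alpha| \neq |d_\beta|$ in $\{1, \ldots, c\}$.

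The second route works inside $K$ directly. Newton's identities applied to $x^3+cx+d$ yield $\mathrm{Tr}(\alpha)=0$ and $\mathrm{Tr}(\alpha^2)=-2c$, so both $\alpha$ and $\beta$ lie in the two-dimensional trace-zero subspace $K_0 = \{x \in K : \mathrm{Tr}(x)=0\}$ and satisfy the same quadratic condition $\mathrm{Tr}(x^2)=-2c$. Writing $\beta = p + q\alpha + r\alpha^2$ in the power basis, the trace-zero condition forces $p = 2cr/3$; the condition $\mathrm{Tr}(\beta^2) = -2c$ cuts out a conic in $(q,r)$ with coefficients in $c$ and $d_\alpha$; and the norm condition $N(\beta) = -d_\beta$ cuts out a cubic. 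The intersection contains the trivial points $(q,r) = (\pm 1, 0)$, which give $\beta = \pm\alpha$ and hence $d_\beta = \pm d_\alpha$, again reduced by the sign restriction to $d_\beta = d_\alpha$. The conjecture is therefore equivalent to showing the intersection contains no other rational point, regardless of which cubic field $K$ one works in.

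The main obstacle, and the reason this appears only as a conjecture, is carrying out the Diophantine step uniformly in $c$. For the discriminant route, one would like to exhibit, for every pair $d_1, d_2$ with $|d_1| \neq |d_2|$ in the allowed range, a prime $p$ at which $4c^3 + 27d_1^2$ and $4c^3 + 27d_2^2$ have $p$-adic valuations of different parity; the difficulty is producing such a $p$ without structural assumptions on $c$. For the representation route, eliminating nontrivial rational points on the conic-cubic intersection seems to depend sensitively on $c$ and $d_\alpha$. A possible refinement is to regard $\mathcal{O}_K \cap K_0$ as a rank-two lattice equipped with the indefinite quadratic trace form $\mathrm{Tr}(x^2)$ and the cubic norm form, and to argue geometrically that the lattice points with $\mathrm{Tr}(x^2) = -2c$ and $|N(x)| \le c$ cannot exceed one in number; quantifying such a statement appears to require new ideas beyond elementary methods, consistent with the authors' reliance on numerical verification up to $c = 5 \times 10^4$.
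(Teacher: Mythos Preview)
The paper does not prove this statement: it is explicitly stated as a \emph{Conjecture}, and the only evidence offered is the numerical verification for $1 \le c \le 5\times 10^4$ mentioned immediately before it. There is therefore no proof in the paper against which your proposal can be compared.

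Your write-up is not a proof either, and you are candid about this: both routes you outline (the discriminant-ratio obstruction and the trace/norm representation inside $K$) are sound as far as they go, and your reductions are correct --- in particular, the discriminant of $x^3+cx+d$ is $-(4c^3+27d^2)$, so a common field forces $(4c^3+27d_\alpha^2)(4c^3+27d_\beta^2)$ to be a perfect square, and the trace computations $\mathrm{Tr}(\alpha)=0$, $\mathrm{Tr}(\alpha^2)=-2c$ are right. But, as you yourself note, neither route closes: you have reformulated the conjecture as a uniform Diophantine non-existence statement without supplying the missing step. This is exactly the state of affairs the authors leave it in, so your proposal should be read as a heuristic discussion of why the conjecture is plausible and what a proof would have to overcome, not as a proof attempt that can be judged correct or incorrect.
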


If ${I}_{b,c}$ has such a property, the binary sequences derived from
$\bar{I}_{b,c}$ are significantly different from each other in the following
sense:
Even if each of the binary sequences is transformed by any operation
expressible as a rational map with rational coefficients (except those
mapping elements of ${I}_{b,c}$ to rational numbers), the resulting
sequences include no identical sequences.
Such operations include not only multi-bit shifts, but a wide
variety of operations, e.g., all-bit inversion, which is expressible as
the map $x \mapsto 1-x$.

\section{Experimental Results}

\subsection{Statistical testing}

We evaluated our generator using three statistical
test suites: DIEHARD \cite{Marsaglia}, NIST statistical test suite
\cite{NIST}, and TestU01 \cite{L'Ecuyer}.
We summarize their results in Table~\ref{tab:StatisticalTesting}.

We performed DIEHARD and NIST tests on the binary sequences of length
$10^6$ derived from $\bar{I}_{0,1001}$.
For TestU01, we prepared test data as follows:
We generated the binary sequences of length $1000032$ using
$\bar{I}_{0,12000001}$.
We then removed the first 32 bits of each sequence and concatenated
the resulting sequences in descending order of $d$ value.
We removed the first 32-bit blocks in order to avoid introducing
correlations among them, because each of these blocks stores
information regarding the position of the initial point.

Here we briefly explain the three statistical test suites
and report their results.

DIEHARD \cite{Marsaglia} contains 234 statistical tests
classified into 18 categories.
The results for 6 of the 18 categories are further tested by
checking the uniformity of the resulting $P$-values.
(That is, DIEHARD consists of 234 first-level tests and 6 second-level
ones.)
Using DIEHARD version ``DOS, Jan 7, 1997'', we performed all 240 tests
with a significance level of 0.01.
As a result, 238 of the 240 tests were passed.

NIST statistical test suite \cite{NIST} contains 188
first-level tests.
In NIST testing, each of 188 first-level tests is performed $10^3$ times,
and the results of each first-level test are
further tested in two ways:
(i) The proportion of passing sequences
is tested using a significance level of 0.001540
(cf. Ref.~\cite{SaitoChaos2}).
(ii) The uniformity of $P$-values
is tested using a significance level of 0.0001.
For this procedure, we used version 2.1.2 of the NIST statistical
test suite.
As a result, 187 of the 188 second-level tests
based on the proportion of passing sequences were passed.
As for the second-level tests based on the uniformity of $P$-values,
all 188 tests were passed.

TestU01 \cite{L'Ecuyer} offers several predefined sets of tests,
including SmallCrush, Crush, and BigCrush, which consist of 15, 144,
and 160 tests, respectively.
In TestU01, 
the result of each test is interpreted as {\it clear failure}
if the $P$-value for the test is less than $10^{-10}$ or greater than
$1 - 10^{-10}$.
The result is interpreted as {\it suspicious} if the $P$-value is in
$\left[10^{-10}, 10^{-4}\right)$ or $\left(1 - 10^{-4}, 1 -
10^{-10}\right]$.
In all other cases, the test is considered as {\it passed}.
Using version 1.2.3 of TestU01, we applied SmallCrush, Crush, and
BigCrush to the test data described above.
As a result, all tests of SmallCrush, Crush, and BigCrush were passed.

Consequently, all tests were passed for NIST's second-level testing
based on the uniformity of $P$-values and TestU01's SmallCrush, Crush,
and BigCrush, while a few tests were failed for DIEHARD and NIST's
second-level testing based on the proportion of passing sequences.
Note that the numbers of failed tests (i.e., two for DIEHARD
and one for NIST's second-level testing based on the proportion of
passing sequences) are within relevant ranges because they are close
to the expected numbers of failed tests (i.e., 2.40 for DIEHARD
and 0.29 for NIST's second-level testing based on the
proportion of passing sequences).
From these results, we can confirm that our generator has good
statistical properties.
\begin{table}
\caption{\label{tab:StatisticalTesting}Results of statistical testing.}
\begin{ruledtabular}
\begin{tabular}{llcccc}
\multicolumn{2}{c}{Statistical testing}
 &
\multicolumn{4}{c}{Number~of:}\\ \cline{3-6}
 &
 &
Tests &
Passed tests &
Suspicious tests &
Failed tests \\
\colrule
DIEHARD & First-level tests & 234 & 232 & --- & 2 \\
       & Second-level tests & 6 & 6 & --- & 0 \\
NIST STS & Second-level tests (proportion) & 188 & 187 & --- & 1 \\
         & Second-level tests (uniformity) & 188 & 188 & --- & 0 \\
TestU01 & SmallCrush & 15 & 15 & 0 & 0 \\
        & Crush &    144 & 144 & 0 & 0 \\
        & BigCrush & 160 & 160 & 0 & 0
\end{tabular}
\end{ruledtabular}
\end{table}

\subsection{Comparison with the Mersenne Twister MT19937}\label{subsec:ComparisonWithMT19937}

Here we attempt a comparison between our generator and MT19937.

As described in Sec.~\ref{sec:Introduction}, MT19937 is a highly
practical generator that produces, at very high speed, a pseudorandom
sequence having a period of length $2^{19937}-1$ and a 623-dimensional
equidistribution property.
In spite of these marked characteristics, this generator has been
reported to fail linear complexity tests and birthday spacings tests
with specific lags \cite{Panneton,L'Ecuyer,Harase}.
This is due to the fact that the generator is based on a linear
recurrence over the two-element field ${\mathbb
F}_2=\left\{0,\,1\right\}$.

MT19937 generates a sequence of 32-bit unsigned integers.
In the following, we will identify a 32-bit unsigned integer with an
element of ${\mathbb F}_2^{32}$.
Also, we will not distinguish between row and column vectors except in
that a vector postmultiplying a matrix will be regarded as a column
vector.
MT19937 is one of the multiple-recursive matrix methods
\cite{Niederreiter1,Niederreiter2}, and any sequence $\left\{
\mathbf{y}_{n} \right\}_{n=0,1,2,\cdots}$ in ${\mathbb F}_2^{32}$
generated by MT19937 obeys the following recurrence relation
(cf. Ref.~\cite{Harase}):
\begin{equation}\label{eq:MTRecurrenceRelation}
\mathbf{y}_{n}=\mathbf{y}_{n-227} + A \mathbf{y}_{n-623} + B \mathbf{y}_{n-624}, ~~n \ge 624,
\end{equation}
where $\mathbf{y}_{0}, \mathbf{y}_{1}, \cdots \mathbf{y}_{623}$ are
initial vectors, and $A$ and $B$ are $32 \times 32$ matrices with
elements in ${\mathbb F}_2$.
The explicit forms of $A$ and $B$ are given in
Appendix~\ref{Appendix:MTMatrices}.

From Eq.~\eqref{eq:MTRecurrenceRelation}, we can grasp the regularity of
the sequence generated by MT19937.
For example, the most significant 8 bits of $\mathbf{y}_{n}$ and those
of $\mathbf{y}_{n-227}$ coincide if an integer $n$ with $n \ge 624$
satisfies the following two conditions:
\begin{description}
\item[(a)] The inner product of the $i$th row vector of $A$ and
$\mathbf{y}_{n-623}$ equals zero for every $i$ with $1 \le i \le 8$.
\item[(b)] The inner product of
the second row vector of $B$
and $\mathbf{y}_{n-624}$ equals zero.
\end{description}
Note that condition (b) is equivalent to the condition that $B
\mathbf{y}_{n-624}=\mathbf{0}$ (cf. the form of $B$ in
Appendix~\ref{Appendix:MTMatrices}).
Let $\mathbf{y}_n=(y_{n,1},y_{n,2},\cdots,y_{n,32}) \in {\mathbb
F}_2^{32}$ and $Y_{n}=\sum_{i=1}^{8} y_{n,i} 2^{8-i}$ for $n \ge 0$.
We generated a sequence $\left\{ \mathbf{y}_{n}
\right\}_{n=0,1,2,\cdots,312499}$ of 32-bit unsigned integers using
MT19937 \cite{FootnoteInitialization}, and plotted, in
Fig.~\ref{fg:Independence}, the points $(Y_{n-227},Y_{n})$ for $n$
satisfying conditions (a) and (b).
All the points are on the diagonal line $Y_{n}=Y_{n-227}$, but,
obviously, this cannot happen with a random sequence.
\begin{figure}
\begin{center}
\includegraphics[width=0.5\textwidth,clip]{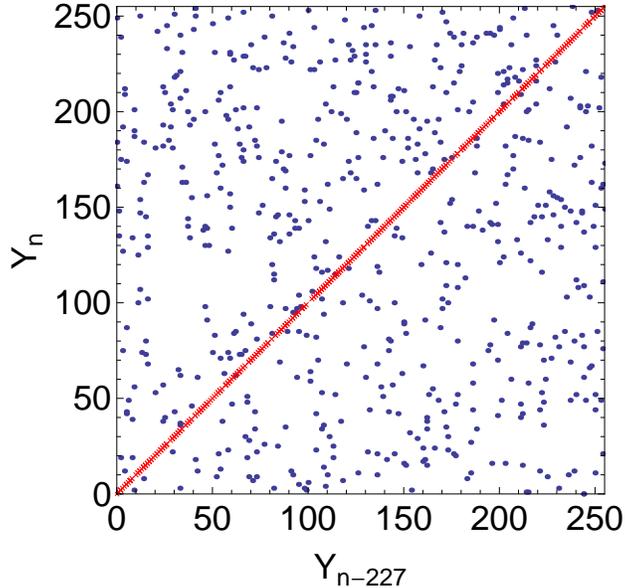}
\end{center}
\caption{\label{fg:Independence}Plot of the points $(Y_{n-227},Y_{n})$ for $n$
satisfying conditions (a) and (b).
Dots represent points obtained from a pseudorandom sequence produced
by our generator.
Crosses represent those by MT19937.}
\end{figure}

On the other hand, our generator produced a binary sequence
of length $10^7$, using $(0, 1, -1) \in \bar{S}$ as an initial point.
Then, by partitioning it into nonoverlapping binary subsequences of
length 32, we transformed it into a sequence $\left\{ \mathbf{y}_{n}
\right\}_{n=0,1,2,\cdots,312499}$ of 32-bit unsigned integers.
Also for this $\left\{ \mathbf{y}_{n}
\right\}_{n=0,1,2,\cdots,312499}$, we plotted, in
Fig.~\ref{fg:Independence}, the points $(Y_{n-227},Y_{n})$ for $n$
satisfying conditions (a) and (b), which was similar to what we did for
$\left\{ \mathbf{y}_{n} \right\}_{n=0,1,2,\cdots,312499}$ obtained by
MT19937.
We can see from Fig.~\ref{fg:Independence} that the points obtained
from our pseudorandom sequence are almost uniformly distributed on the
square.
Although the computational cost of our generator is significantly
higher than that of MT19937, our pseudorandom sequence displays the
same behavior as true (uniform) random sequences.

\section{Conclusion}

In this paper, we have introduced a pseudorandom number generator
using chaotic true orbits of the Bernoulli map on cubic algebraic
integers.
Although this generator has a high computational cost, it exactly
simulates the Bernoulli map that can generate ideal random binary
sequences.
We also have clarified a seed selection method that can select initial
points (i.e., seeds) without bias and can avoid overlaps in latter
parts of the pseudorandom sequences derived from them.
Moreover, we have obtained experimental results supporting the
conjecture that the initial point sets $I_{0,c}$ with $c \in {\mathbb
Z}_{> 0}$ have a more desirable property such that each element of
${I}_{0,c}$ belongs to a different cubic field.
In order to demonstrate the capabilities of our generator, we have
performed two kinds of computer experiments:
Firstly, we have tested our generator using three statistical test
suites---DIEHARD, NIST statistical test suite, and TestU01---and
have shown that it has good statistical properties.
Secondly, we have examined the independence property of pseudorandom
numbers and have clarified an advantage that our generator has over what is probably the most popular generator, the Mersenne Twister
MT19937.

\section*{Acknowledgments}

We thank Shigeki Akiyama, Shunji Ito, Teturo Kamae, Jun-ichi Tamura,
Shin-ichi Yasutomi, and Masamichi Yoshida for their suggestions.
This research was supported by JSPS KAKENHI Grant Number 15K00342.

\appendix

\newpage

\section{Explicit forms of matrices $A$ and $B$ in Eq.~\eqref{eq:MTRecurrenceRelation}}
\label{Appendix:MTMatrices}

In this Appendix, we provide the explicit forms of matrices $A$
and $B$ in the recurrence relation \eqref{eq:MTRecurrenceRelation} for
the Mersenne Twister MT19937.

Matrix $A$:

{\tiny
\[
\left(
\begin{array}{cccccccccccccccccccccccccccccccc}
 0 & 0 & 1 & 1 & 0 & 0 & 0 & 0 & 0 & 1 & 0 & 1 & 0 & 1 & 1 & 0 & 0 & 0 & 0 & 0
   & 1 & 1 & 0 & 0 & 1 & 0 & 0 & 0 & 0 & 1 & 0 & 1 \\[-2.5pt]
 0 & 0 & 1 & 0 & 0 & 0 & 1 & 0 & 0 & 1 & 0 & 0 & 0 & 1 & 0 & 1 & 0 & 0 & 0 & 0
   & 1 & 0 & 0 & 0 & 1 & 0 & 0 & 0 & 0 & 0 & 0 & 1 \\[-2.5pt]
 0 & 1 & 1 & 0 & 0 & 1 & 1 & 0 & 0 & 1 & 0 & 0 & 0 & 1 & 0 & 0 & 0 & 0 & 0 & 0
   & 1 & 0 & 0 & 1 & 1 & 0 & 0 & 0 & 0 & 0 & 0 & 1 \\[-2.5pt]
 0 & 0 & 0 & 0 & 0 & 0 & 0 & 0 & 0 & 0 & 0 & 0 & 0 & 1 & 0 & 0 & 0 & 1 & 0 & 0
   & 1 & 0 & 0 & 0 & 0 & 0 & 0 & 0 & 0 & 0 & 0 & 1 \\[-2.5pt]
 1 & 0 & 1 & 1 & 0 & 0 & 1 & 1 & 0 & 1 & 0 & 0 & 0 & 1 & 0 & 0 & 0 & 0 & 1 & 0
   & 1 & 0 & 0 & 0 & 1 & 1 & 0 & 0 & 0 & 0 & 0 & 1 \\[-2.5pt]
 1 & 0 & 1 & 0 & 0 & 0 & 1 & 1 & 1 & 1 & 0 & 1 & 0 & 1 & 1 & 1 & 0 & 0 & 0 & 0
   & 1 & 0 & 1 & 0 & 1 & 0 & 1 & 0 & 0 & 1 & 0 & 1 \\[-2.5pt]
 0 & 1 & 1 & 0 & 0 & 1 & 1 & 0 & 1 & 0 & 0 & 0 & 1 & 1 & 0 & 0 & 0 & 0 & 0 & 1
   & 1 & 0 & 0 & 0 & 1 & 0 & 1 & 1 & 0 & 0 & 0 & 1 \\[-2.5pt]
 0 & 0 & 1 & 0 & 0 & 0 & 1 & 0 & 0 & 0 & 1 & 0 & 0 & 1 & 0 & 0 & 0 & 0 & 0 & 0
   & 1 & 0 & 0 & 0 & 0 & 0 & 0 & 0 & 1 & 0 & 0 & 0 \\[-2.5pt]
 0 & 0 & 1 & 0 & 0 & 0 & 1 & 1 & 0 & 1 & 0 & 0 & 0 & 1 & 1 & 0 & 0 & 0 & 0 & 0
   & 1 & 0 & 0 & 0 & 1 & 0 & 0 & 0 & 0 & 0 & 0 & 1 \\[-2.5pt]
 0 & 0 & 0 & 0 & 0 & 0 & 0 & 0 & 1 & 0 & 0 & 0 & 1 & 0 & 0 & 0 & 0 & 0 & 0 & 0
   & 0 & 0 & 0 & 0 & 0 & 0 & 0 & 0 & 0 & 0 & 1 & 0 \\[-2.5pt]
 0 & 0 & 0 & 0 & 0 & 0 & 1 & 0 & 0 & 1 & 0 & 0 & 0 & 0 & 0 & 0 & 1 & 0 & 0 & 0
   & 0 & 0 & 0 & 0 & 1 & 0 & 0 & 0 & 0 & 0 & 0 & 0 \\[-2.5pt]
 0 & 0 & 1 & 0 & 0 & 0 & 0 & 0 & 0 & 1 & 1 & 0 & 0 & 0 & 0 & 0 & 0 & 1 & 0 & 0
   & 1 & 0 & 0 & 0 & 0 & 0 & 0 & 0 & 0 & 0 & 0 & 0 \\[-2.5pt]
 0 & 0 & 0 & 0 & 1 & 0 & 0 & 1 & 0 & 0 & 0 & 0 & 0 & 0 & 1 & 1 & 0 & 0 & 1 & 0
   & 0 & 0 & 1 & 0 & 0 & 0 & 0 & 0 & 0 & 1 & 0 & 0 \\[-2.5pt]
 0 & 0 & 1 & 0 & 0 & 0 & 1 & 0 & 0 & 0 & 0 & 0 & 1 & 1 & 0 & 0 & 0 & 0 & 0 & 0
   & 1 & 0 & 0 & 0 & 1 & 0 & 0 & 1 & 0 & 0 & 0 & 1 \\[-2.5pt]
 0 & 0 & 0 & 0 & 0 & 0 & 1 & 0 & 0 & 1 & 0 & 0 & 0 & 0 & 0 & 0 & 0 & 0 & 0 & 0
   & 0 & 0 & 0 & 0 & 1 & 0 & 0 & 0 & 0 & 0 & 0 & 1 \\[-2.5pt]
 0 & 0 & 1 & 0 & 0 & 0 & 1 & 0 & 0 & 1 & 0 & 1 & 0 & 1 & 1 & 0 & 0 & 0 & 0 & 0
   & 1 & 0 & 0 & 0 & 1 & 0 & 0 & 0 & 0 & 1 & 0 & 1 \\[-2.5pt]
 0 & 0 & 1 & 0 & 0 & 0 & 1 & 0 & 0 & 1 & 0 & 0 & 0 & 1 & 0 & 1 & 0 & 0 & 0 & 0
   & 1 & 0 & 0 & 0 & 1 & 0 & 0 & 0 & 0 & 0 & 0 & 1 \\[-2.5pt]
 0 & 0 & 1 & 0 & 0 & 1 & 1 & 0 & 0 & 1 & 0 & 0 & 0 & 1 & 0 & 0 & 1 & 0 & 0 & 0
   & 1 & 0 & 0 & 1 & 1 & 0 & 0 & 0 & 0 & 0 & 0 & 1 \\[-2.5pt]
 0 & 0 & 1 & 1 & 0 & 0 & 1 & 0 & 0 & 1 & 0 & 1 & 0 & 0 & 1 & 0 & 0 & 1 & 0 & 0
   & 1 & 1 & 0 & 0 & 0 & 0 & 0 & 0 & 0 & 1 & 0 & 0 \\[-2.5pt]
 1 & 0 & 0 & 0 & 0 & 0 & 0 & 1 & 0 & 0 & 0 & 0 & 0 & 0 & 0 & 1 & 0 & 0 & 1 & 0
   & 0 & 0 & 0 & 0 & 0 & 1 & 0 & 0 & 0 & 0 & 0 & 0 \\[-2.5pt]
 0 & 0 & 1 & 0 & 0 & 1 & 1 & 0 & 1 & 1 & 0 & 0 & 0 & 1 & 0 & 0 & 0 & 0 & 0 & 1
   & 1 & 0 & 0 & 1 & 1 & 0 & 1 & 0 & 0 & 0 & 0 & 1 \\[-2.5pt]
 0 & 0 & 0 & 0 & 0 & 0 & 1 & 0 & 0 & 0 & 0 & 0 & 0 & 0 & 0 & 0 & 0 & 1 & 0 & 0
   & 1 & 0 & 0 & 0 & 1 & 0 & 0 & 1 & 0 & 0 & 0 & 0 \\[-2.5pt]
 1 & 0 & 1 & 0 & 0 & 0 & 1 & 1 & 0 & 1 & 0 & 0 & 0 & 1 & 0 & 0 & 0 & 0 & 1 & 0
   & 1 & 1 & 0 & 0 & 1 & 1 & 0 & 0 & 0 & 0 & 0 & 1 \\[-2.5pt]
 1 & 0 & 0 & 0 & 1 & 0 & 0 & 1 & 1 & 0 & 0 & 0 & 0 & 0 & 1 & 1 & 0 & 0 & 0 & 0
   & 0 & 0 & 0 & 0 & 0 & 0 & 1 & 0 & 0 & 0 & 0 & 0 \\[-2.5pt]
 0 & 1 & 1 & 0 & 0 & 0 & 1 & 0 & 1 & 0 & 0 & 0 & 0 & 1 & 0 & 0 & 0 & 0 & 0 & 1
   & 1 & 0 & 0 & 1 & 1 & 0 & 1 & 1 & 0 & 0 & 1 & 1 \\[-2.5pt]
 0 & 0 & 0 & 0 & 0 & 0 & 1 & 0 & 0 & 1 & 1 & 0 & 0 & 1 & 0 & 0 & 0 & 0 & 0 & 0
   & 0 & 0 & 0 & 0 & 0 & 0 & 0 & 0 & 1 & 0 & 0 & 0 \\[-2.5pt]
 0 & 0 & 1 & 0 & 0 & 0 & 1 & 0 & 0 & 1 & 0 & 0 & 0 & 1 & 1 & 0 & 0 & 0 & 0 & 0
   & 1 & 0 & 0 & 0 & 1 & 1 & 0 & 0 & 0 & 0 & 0 & 1 \\[-2.5pt]
 0 & 0 & 0 & 0 & 0 & 0 & 0 & 0 & 0 & 0 & 0 & 0 & 1 & 0 & 0 & 0 & 0 & 0 & 0 & 0
   & 0 & 0 & 0 & 0 & 0 & 0 & 1 & 0 & 0 & 0 & 1 & 0 \\[-2.5pt]
 0 & 0 & 1 & 0 & 0 & 0 & 0 & 0 & 0 & 1 & 0 & 0 & 0 & 1 & 0 & 0 & 1 & 0 & 0 & 0
   & 1 & 0 & 0 & 0 & 0 & 0 & 0 & 1 & 0 & 0 & 0 & 1 \\[-2.5pt]
 0 & 0 & 1 & 0 & 0 & 0 & 0 & 0 & 0 & 1 & 0 & 0 & 0 & 0 & 0 & 0 & 0 & 1 & 0 & 0
   & 1 & 0 & 0 & 0 & 0 & 0 & 0 & 0 & 1 & 0 & 0 & 0 \\[-2.5pt]
 0 & 0 & 0 & 0 & 1 & 0 & 0 & 1 & 0 & 0 & 0 & 1 & 0 & 0 & 1 & 1 & 0 & 0 & 1 & 0
   & 0 & 0 & 1 & 0 & 0 & 0 & 0 & 0 & 0 & 0 & 0 & 0 \\[-2.5pt]
 0 & 0 & 0 & 0 & 0 & 0 & 0 & 0 & 0 & 1 & 0 & 0 & 0 & 0 & 0 & 0 & 0 & 0 & 0 & 0
   & 0 & 0 & 0 & 0 & 0 & 0 & 0 & 1 & 0 & 0 & 1 & 0 \\
\end{array}
\right)
\]
}

Matrix $B$:

{\tiny
\[
\left(
\begin{array}{cccccccccccccccccccccccccccccccc}
 0 & 0 & 0 & 0 & 0 & 0 & 0 & 0 & 0 & 0 & 0 & 0 & 0 & 0 & 0 & 0 & 0 & 0 & 0 & 0
   & 0 & 0 & 0 & 0 & 0 & 0 & 0 & 0 & 0 & 0 & 0 & 0 \\[-2.5pt]
 1 & 0 & 0 & 0 & 1 & 0 & 0 & 1 & 0 & 0 & 0 & 1 & 0 & 0 & 1 & 1 & 0 & 0 & 0 & 0
   & 0 & 0 & 1 & 0 & 0 & 0 & 0 & 0 & 0 & 1 & 0 & 0 \\[-2.5pt]
 0 & 0 & 0 & 0 & 0 & 0 & 0 & 0 & 0 & 0 & 0 & 0 & 0 & 0 & 0 & 0 & 0 & 0 & 0 & 0
   & 0 & 0 & 0 & 0 & 0 & 0 & 0 & 0 & 0 & 0 & 0 & 0 \\[-2.5pt]
 0 & 0 & 0 & 0 & 0 & 0 & 0 & 0 & 0 & 0 & 0 & 0 & 0 & 0 & 0 & 0 & 0 & 0 & 0 & 0
   & 0 & 0 & 0 & 0 & 0 & 0 & 0 & 0 & 0 & 0 & 0 & 0 \\[-2.5pt]
 0 & 0 & 0 & 0 & 0 & 0 & 0 & 0 & 0 & 0 & 0 & 0 & 0 & 0 & 0 & 0 & 0 & 0 & 0 & 0
   & 0 & 0 & 0 & 0 & 0 & 0 & 0 & 0 & 0 & 0 & 0 & 0 \\[-2.5pt]
 1 & 0 & 0 & 0 & 1 & 0 & 0 & 1 & 0 & 0 & 0 & 1 & 0 & 0 & 1 & 1 & 0 & 0 & 0 & 0
   & 0 & 0 & 1 & 0 & 0 & 0 & 0 & 0 & 0 & 1 & 0 & 0 \\[-2.5pt]
 0 & 0 & 0 & 0 & 0 & 0 & 0 & 0 & 0 & 0 & 0 & 0 & 0 & 0 & 0 & 0 & 0 & 0 & 0 & 0
   & 0 & 0 & 0 & 0 & 0 & 0 & 0 & 0 & 0 & 0 & 0 & 0 \\[-2.5pt]
 0 & 0 & 0 & 0 & 0 & 0 & 0 & 0 & 0 & 0 & 0 & 0 & 0 & 0 & 0 & 0 & 0 & 0 & 0 & 0
   & 0 & 0 & 0 & 0 & 0 & 0 & 0 & 0 & 0 & 0 & 0 & 0 \\[-2.5pt]
 0 & 0 & 0 & 0 & 0 & 0 & 0 & 0 & 0 & 0 & 0 & 0 & 0 & 0 & 0 & 0 & 0 & 0 & 0 & 0
   & 0 & 0 & 0 & 0 & 0 & 0 & 0 & 0 & 0 & 0 & 0 & 0 \\[-2.5pt]
 0 & 0 & 0 & 0 & 0 & 0 & 0 & 0 & 0 & 0 & 0 & 0 & 0 & 0 & 0 & 0 & 0 & 0 & 0 & 0
   & 0 & 0 & 0 & 0 & 0 & 0 & 0 & 0 & 0 & 0 & 0 & 0 \\[-2.5pt]
 0 & 0 & 0 & 0 & 0 & 0 & 0 & 0 & 0 & 0 & 0 & 0 & 0 & 0 & 0 & 0 & 0 & 0 & 0 & 0
   & 0 & 0 & 0 & 0 & 0 & 0 & 0 & 0 & 0 & 0 & 0 & 0 \\[-2.5pt]
 0 & 0 & 0 & 0 & 0 & 0 & 0 & 0 & 0 & 0 & 0 & 0 & 0 & 0 & 0 & 0 & 0 & 0 & 0 & 0
   & 0 & 0 & 0 & 0 & 0 & 0 & 0 & 0 & 0 & 0 & 0 & 0 \\[-2.5pt]
 1 & 0 & 0 & 0 & 1 & 0 & 0 & 1 & 0 & 0 & 0 & 1 & 0 & 0 & 1 & 1 & 0 & 0 & 0 & 0
   & 0 & 0 & 1 & 0 & 0 & 0 & 0 & 0 & 0 & 1 & 0 & 0 \\[-2.5pt]
 0 & 0 & 0 & 0 & 0 & 0 & 0 & 0 & 0 & 0 & 0 & 0 & 0 & 0 & 0 & 0 & 0 & 0 & 0 & 0
   & 0 & 0 & 0 & 0 & 0 & 0 & 0 & 0 & 0 & 0 & 0 & 0 \\[-2.5pt]
 0 & 0 & 0 & 0 & 0 & 0 & 0 & 0 & 0 & 0 & 0 & 0 & 0 & 0 & 0 & 0 & 0 & 0 & 0 & 0
   & 0 & 0 & 0 & 0 & 0 & 0 & 0 & 0 & 0 & 0 & 0 & 0 \\[-2.5pt]
 0 & 0 & 0 & 0 & 0 & 0 & 0 & 0 & 0 & 0 & 0 & 0 & 0 & 0 & 0 & 0 & 0 & 0 & 0 & 0
   & 0 & 0 & 0 & 0 & 0 & 0 & 0 & 0 & 0 & 0 & 0 & 0 \\[-2.5pt]
 0 & 0 & 0 & 0 & 0 & 0 & 0 & 0 & 0 & 0 & 0 & 0 & 0 & 0 & 0 & 0 & 0 & 0 & 0 & 0
   & 0 & 0 & 0 & 0 & 0 & 0 & 0 & 0 & 0 & 0 & 0 & 0 \\[-2.5pt]
 0 & 0 & 0 & 0 & 0 & 0 & 0 & 0 & 0 & 0 & 0 & 0 & 0 & 0 & 0 & 0 & 0 & 0 & 0 & 0
   & 0 & 0 & 0 & 0 & 0 & 0 & 0 & 0 & 0 & 0 & 0 & 0 \\[-2.5pt]
 0 & 0 & 0 & 0 & 0 & 0 & 0 & 0 & 0 & 0 & 0 & 0 & 0 & 0 & 0 & 0 & 0 & 0 & 0 & 0
   & 0 & 0 & 0 & 0 & 0 & 0 & 0 & 0 & 0 & 0 & 0 & 0 \\[-2.5pt]
 1 & 0 & 0 & 0 & 1 & 0 & 0 & 1 & 0 & 0 & 0 & 1 & 0 & 0 & 1 & 1 & 0 & 0 & 0 & 0
   & 0 & 0 & 1 & 0 & 0 & 0 & 0 & 0 & 0 & 1 & 0 & 0 \\[-2.5pt]
 0 & 0 & 0 & 0 & 0 & 0 & 0 & 0 & 0 & 0 & 0 & 0 & 0 & 0 & 0 & 0 & 0 & 0 & 0 & 0
   & 0 & 0 & 0 & 0 & 0 & 0 & 0 & 0 & 0 & 0 & 0 & 0 \\[-2.5pt]
 0 & 0 & 0 & 0 & 0 & 0 & 0 & 0 & 0 & 0 & 0 & 0 & 0 & 0 & 0 & 0 & 0 & 0 & 0 & 0
   & 0 & 0 & 0 & 0 & 0 & 0 & 0 & 0 & 0 & 0 & 0 & 0 \\[-2.5pt]
 0 & 0 & 0 & 0 & 0 & 0 & 0 & 0 & 0 & 0 & 0 & 0 & 0 & 0 & 0 & 0 & 0 & 0 & 0 & 0
   & 0 & 0 & 0 & 0 & 0 & 0 & 0 & 0 & 0 & 0 & 0 & 0 \\[-2.5pt]
 1 & 0 & 0 & 0 & 1 & 0 & 0 & 1 & 0 & 0 & 0 & 1 & 0 & 0 & 1 & 1 & 0 & 0 & 0 & 0
   & 0 & 0 & 1 & 0 & 0 & 0 & 0 & 0 & 0 & 1 & 0 & 0 \\[-2.5pt]
 0 & 0 & 0 & 0 & 0 & 0 & 0 & 0 & 0 & 0 & 0 & 0 & 0 & 0 & 0 & 0 & 0 & 0 & 0 & 0
   & 0 & 0 & 0 & 0 & 0 & 0 & 0 & 0 & 0 & 0 & 0 & 0 \\[-2.5pt]
 0 & 0 & 0 & 0 & 0 & 0 & 0 & 0 & 0 & 0 & 0 & 0 & 0 & 0 & 0 & 0 & 0 & 0 & 0 & 0
   & 0 & 0 & 0 & 0 & 0 & 0 & 0 & 0 & 0 & 0 & 0 & 0 \\[-2.5pt]
 0 & 0 & 0 & 0 & 0 & 0 & 0 & 0 & 0 & 0 & 0 & 0 & 0 & 0 & 0 & 0 & 0 & 0 & 0 & 0
   & 0 & 0 & 0 & 0 & 0 & 0 & 0 & 0 & 0 & 0 & 0 & 0 \\[-2.5pt]
 0 & 0 & 0 & 0 & 0 & 0 & 0 & 0 & 0 & 0 & 0 & 0 & 0 & 0 & 0 & 0 & 0 & 0 & 0 & 0
   & 0 & 0 & 0 & 0 & 0 & 0 & 0 & 0 & 0 & 0 & 0 & 0 \\[-2.5pt]
 0 & 0 & 0 & 0 & 0 & 0 & 0 & 0 & 0 & 0 & 0 & 0 & 0 & 0 & 0 & 0 & 0 & 0 & 0 & 0
   & 0 & 0 & 0 & 0 & 0 & 0 & 0 & 0 & 0 & 0 & 0 & 0 \\[-2.5pt]
 0 & 0 & 0 & 0 & 0 & 0 & 0 & 0 & 0 & 0 & 0 & 0 & 0 & 0 & 0 & 0 & 0 & 0 & 0 & 0
   & 0 & 0 & 0 & 0 & 0 & 0 & 0 & 0 & 0 & 0 & 0 & 0 \\[-2.5pt]
 1 & 0 & 0 & 0 & 1 & 0 & 0 & 1 & 0 & 0 & 0 & 1 & 0 & 0 & 1 & 1 & 0 & 0 & 0 & 0
   & 0 & 0 & 1 & 0 & 0 & 0 & 0 & 0 & 0 & 1 & 0 & 0 \\[-2.5pt]
 0 & 0 & 0 & 0 & 0 & 0 & 0 & 0 & 0 & 0 & 0 & 0 & 0 & 0 & 0 & 0 & 0 & 0 & 0 & 0
   & 0 & 0 & 0 & 0 & 0 & 0 & 0 & 0 & 0 & 0 & 0 & 0 \\
\end{array}
\right)
\]
}

\end{document}